\newcommand\conv{\otimes}
\newcommand{\ou}{\overline{u}}
\newcommand{\ov}{\overline{v}}
\newcommand\Linf{L_{\infty}}
\newtheorem{theorem}{Theorem}[section]
\newtheorem{lemma}[theorem]{Lemma}
\newtheorem{defn}[theorem]{Definition}
\newtheorem{prop}[theorem]{Proposition}
\title{Thompson's group $F$ is 1-counter graph automatic}
\author{Murray Elder and Jennifer Taback}
\address{School of Mathematical and Physical Sciences,
The University of Newcastle,
Callaghan NSW 2308, Australia}
\email{murrayelder@gmail.com}
\address{Department of Mathematics,
Bowdoin College, Brunswick, ME 04011} \email{jtaback@bowdoin.edu}
\thanks{The first author is supported by Australian Research Council grant  FT110100178.  The second author acknowledges support from National Science Foundation grant DMS-1105407 and Simons Foundation grant 31736 to Bowdoin College.  The authors wish to thank Sean Cleary, Bob Gilman, Alexei Miasnikov and especially Sharif Younes for many helpful discussions during the writing of this paper.}
\keywords{Thompson's group, automatic group, graph automatic group, $\mathcal C$-graph automatic group}
\subjclass[2010]{20F65, 68Q45}
\begin{document}

\maketitle

\begin{abstract}
It is not known whether Thompson's group $F$ is automatic. With the recent extensions of the notion of an automatic group to graph automatic by Kharlampovich, Khoussainov and Miasnikov and then to ${\mathcal C}$-graph automatic by the authors, a compelling question is whether $F$ is graph automatic or ${\mathcal C}$-graph automatic for an appropriate language class ${\mathcal C}$.  The extended definitions allow the use of a symbol alphabet for the normal form language, replacing the dependence on generating set.  In this paper we construct a $1$-counter graph automatic structure for $F$ based on the standard infinite normal form for group elements.
\end{abstract}

\section{Introduction}\label{sec:intro}
The notion of an automatic group was introduced in the 1990s based on ideas of Thurston, Cannon, Gilman, Epstein and Holt in the hopes of categorizing the fundamental groups of compact 3-manifolds. As not all nilpotent groups proved to be automatic, the definition required expansion to make it more robust.  A simultaneous motivation behind this definition was to use the automatic structure to ease computation within these groups.  To that end, it was shown  that automatic groups are finitely presented, have word problem solvable in quadratic time and possess at most a quadratic Dehn function \cite{WordProc}.  In an automatic group there is a natural set of quasigeodesic normal forms which define the structure, and this normal form representative for any group element can always be found in quadratic time.

It is not known whether Thompson's group $F$ is automatic.  Guba and Sapir present a regular normal form for elements of $F$ and prove that the Dehn function of $F$ is quadratic in \cite{GS}.  The word problem in $F$ is solvable in $O(n \log n )$ time (see, for example, \cite{SU}) and it is shown in \cite{BG} that $F$ is of type $FP_{\infty}$.   However, no one has been able to prove that the group is (or is not) automatic.  In \cite{CT} it is shown that $F$ cannot have a regular combing by geodesics with respect to the standard finite generating set, while in \cite{CHST} it is shown that $F$ admits a tame $1$-combing by quasigeodesics for this generating set, with a linear tameness function.

In \cite{KKM} Kharlampovich, Khoussainov and Miasnikov expand the definition of an automatic group to that of a graph automatic group by allowing the normal forms for elements to be expressed in terms of a symbol alphabet, rather than the group generators.  The precise definition is given below in Section \ref{sec:graph-aut}.  Groups which are not automatic, but are graph automatic with this definition include the solvable Baumslag-Solitar groups and finitely generated groups of nilpotency class  2. Graph automatic groups retain the properties that the word problem is solvable in quadratic time, and the normal form for any group element is computable in quadratic time.

The concept of a graph automatic group was extended by the authors in \cite{ET} to a ${\mathcal C}$-graph automatic group, where all languages comprising the structure are required to be in the language class ${\mathcal C}$.  This definition is stated precisely in Section \ref{sec:graph-aut}.

The introduction of a symbol alphabet makes $F$ a natural candidate for the class of graph (or ${\mathcal C}$-graph) automatic groups, as there are two natural symbol alphabets associated to this group.

\medskip

\begin{enumerate}
\item The first symbol alphabet is based on the standard infinite normal form for elements of $F$, and in this paper we prove that there is a language of quasi-geodesic normal forms over this alphabet which forms a $1$-counter graph automatic structure for $F$.  Moreover, $F$ is not graph automatic with respect to this language of normal forms.

    \medskip

\item The second symbol alphabet is based on the combinatorial ``caret types" of nodes in the reduced pair of trees representing a given element.  In \cite{TY}, Younes and the second author define a language of quasigeodesic normal forms for elements of $F$ based on this alphabet which they show to be the basis of a $3$-counter graph automatic structure for $F$, although they note that the number of counters can be reduced to $2$.  They also show that $F$ is not graph automatic with respect to this language of normal forms.
\end{enumerate}
Neither of these results rules out the possibility that $F$ is graph automatic with respect to a different normal form, or alternate symbol alphabet.

We remark that a third possibility for constructing a $\mathcal C$-graph automatic structure for $F$ would be to use the regular normal form language of   Guba and Sapir \cite{GS}.  In preparing this article we considered the complexity of the multiplier automata for this structure and found that the multiplier automaton for the generator $x_1$  would require at least $4$ counters, and hence did not include the details in this article.

This paper is organized as follows.  In Section \ref{sec:background}  we recall the definition of a ${\mathcal C}$-graph automatic group introduced in \cite{ET} which generalizes the definition of a graph automatic group introduced by Kharlampovich, Khoussainov and Miasnikov in \cite{KKM}, and present a brief introduction to Thompson's group $F$.
Section \ref{sec:Infnf} is devoted to describing a language of normal forms based on the standard infinite normal form for elements of $F$, and the resulting 1-counter graph automatic structure.

\section{Background}\label{sec:background}

We present some background material on ${\mathcal C}$-graph automatic groups as well as a brief introduction to Thompson's group $F$.

\subsection{Generalizations of Automaticity}\label{sec:graph-aut}

The following definitions are taken from \cite{ET} and include the definition of a graph automatic group given in \cite{KKM}.  We begin with the definition of a convolution of strings, following \cite{KKM}.

Let $G$ be a group with symmetric  generating set $X$, and $\Lambda$ a finite set of symbols. In general we do not assume that $X$ is finite.
The number of symbols (letters) in a word $u\in\Lambda^*$ is denoted $|u|_{\Lambda}$.

\begin{defn}[convolution;  Definition 2.3 of \cite{KKM}]
Let $\Lambda$ be a finite set of symbols, $\diamond$  a symbol not in $\Lambda$,  and let $L_1,\dots, L_k$ be a finite set of languages over $\Lambda$. Set  $\Lambda_{\diamond}=\Lambda\cup\{\diamond\}$. Define the {\em convolution of a tuple} $(w_1,\dots, w_k)\in L_1\times \dots \times L_k$ to be the string $\otimes(w_1,\dots, w_k)$ of length $\max |w_i|_{\Lambda}$ over the alphabet $\left(\Lambda_{\diamond}\right)^k$  as follows.
The $i$th symbol of the string is
\[\left(\begin{array}{c}
\lambda_1\\
\vdots\\
\lambda_k
\end{array}\right)\]
where $\lambda_j$ is the $i$th letter of $w_j$ if $i\leq |w_j|_{\Lambda}$ and $\diamond$ otherwise.
Then  \[\otimes(L_1,\dots, L_k)=\left\{\otimes(w_1,\dots, w_k) \mid w_i\in L_i\right\}.\]
\end{defn}
We note that the convolution of regular languages is again a regular language.

As an example, if $w_1=aa, w_2=bbb$ and $w_3=a$ then
\[\otimes(w_1,w_2,w_3)=\left(\begin{array}{c}
a\\
b\\
a
\end{array}\right)
\left(\begin{array}{c}
a\\
b\\
\diamond
\end{array}\right)
\left(\begin{array}{c}
\diamond\\
b\\
\diamond
\end{array}\right)\]

When $L_i=\Lambda^*$ for all $i$ the definition in \cite{KKM} is recovered.

\begin{defn}[quasigeodesic normal form]
A  {\em normal form for $(G,X,\Lambda)$} is a set of words $L\subseteq \Lambda^*$ in bijection with $G$. A normal form $L$ is
  {\em quasigeodesic}  if there is a constant $D$ so  that $$|u|_{\Lambda}\leq D(||u||_X+1)$$ for each $u\in L$,  where $||u||_X$ is the length of a geodesic  in $X^*$ for the group element represented by  $u$.
\end{defn}
The $||u||_X+1$ in the definition allows for normal forms where the identity of the group is represented by a nonempty string of length at most $D$.  We denote the image of $u\in L$ under the bijection with $G$ by $\ou$.

\begin{defn}[$\mathcal C$-graph automatic group] \label{def:Cgraph}
Let $\mathcal C$ be a formal language class, $(G,X)$ a group and symmetric  generating set, and $\Lambda$ a finite set of symbols.  We say that $(G,X,\Lambda)$ is $\mathcal C${\em -graph automatic} if there is a  normal form $L \subset \Lambda^*$ in the language class $\mathcal C$, such that for each $x\in X$ the language $L_x=\{\otimes(u,v) \mid u,v\in L,  \ov =_G \ou x\}$ is in the class $\mathcal C$.
\end{defn}

If we take ${\mathcal C}$=$\{$regular languages$\}$ we recover the definition of a graph automatic group given in \cite{KKM}.  If we further require $\Lambda$ to be the set $X$ of group generators, we obtain the original definition of an automatic group.

In this paper we will consider the case when ${\mathcal C}$ denotes the set of non-blind deterministic 1-counter languages, which are defined as follows.

\begin{defn}[1-counter automaton]
A {\em non-blind deterministic $1$-counter automaton} is
  a deterministic finite
state automaton augmented with an integer counter: the counter is
initialized to zero, and can be incremented, decremented, compared to zero and set to zero during operation.  For each configuration of the machine and subsequent input letter, there is at most one possible move. The automaton accepts a word exactly if upon reading the word it  reaches
an accepting state with the counter equal to zero.
\end{defn}

In drawing a 1-counter automaton, we label transitions by the input letter to be read, with subscripts to denote the possible counter instructions:
\begin{itemize}
\item $=0$ to indicate the edge may only be traversed if the value of the counter is $0$.
\item $>0$ (resp. $<$) to indicate the edge may only be traversed if the value of the counter is greater (resp. less) than $0$.
\item $+1$ to increment the counter by $1$.
\item $-1$ to decrement the counter by $1$.
\end{itemize}

The class $\mathscr C_1$ of 1-counter languages is closed  under homomorphism, inverse homomorphism, intersection with regular languages, and finite intersection, and is strictly contained in the class of context-free languages (see \cite{HU}, for example).

We end this section with a lemma which is used to streamline several proofs in later sections, and is a restatement of Lemma 2.6 of \cite{TY}.

\begin{lemma}[\cite{TY}, Lemma 2.6] \label{L:RegularLanguages}
Fix a symbol alphabet $\Lambda$ and let $L_1$ and $L_2$ be regular languages over $\Lambda$, and $x \in \Lambda^*$ a fixed word.  Then the set
$$\{\conv(zw,zxw) \mid z \in L_1, \ x \in \Lambda^*, \ w \in L_2 \}$$ is a regular language.
\end{lemma}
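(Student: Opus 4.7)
The plan is to reduce to a simpler sub-problem via a decomposition of the convolution, and then build a finite automaton for the sub-problem.

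First, observe that for $z, w \in \Lambda^*$ the convolution $\conv(zw, zxw)$, of length $|z|+|x|+|w|$, splits as a concatenation
\[
\conv(zw, zxw) \;=\; \phi(z)\cdot \conv(w, xw),
\]
where $\phi\colon \Lambda^* \to (\Lambda_\diamond^2)^*$ is the diagonal homomorphism $a \mapsto \binom{a}{a}$. Since $\phi(L_1)$ is regular (as the homomorphic image of a regular language) and regular languages are closed under concatenation, it suffices to prove that
\[
T := \{\conv(w, xw) \mid w \in L_2\}
\]
is regular.

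The string $\conv(w, xw)$ has length $n + |w|$, where $n := |x|$ is a \emph{fixed} constant; its top track is $w\diamond^{n}$ and its bottom track is $xw$. I would construct a finite automaton $A$ for $T$ whose state encodes the current state of a DFA $M_2$ for $L_2$, a phase indicator in $\{2, 3\}$, a position counter in $\{0, \ldots, n\}$ used only in phase~2, and a buffer $\beta$ of at most $n$ letters from $\Lambda \cup \{\diamond\}$. Phase~2 lasts exactly $n$ steps and reads pairs $\binom{c_i}{x_i}$: the bottom must match the $i$th letter of $x$, while the top $c_i$ is either the next letter of $w$ (in which case $A$ advances $M_2$ on $c_i$ and appends $c_i$ to $\beta$) or $\diamond$ (in which case $A$ simply appends $\diamond$ to $\beta$ and forbids any later non-$\diamond$ top symbol in phase~2). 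Phase~3 reads $\binom{c}{b}$, requires $b$ to equal the head of $\beta$, pops $b$, advances $M_2$ on $c$ if $c \in \Lambda$, and appends $c$ to $\beta$. The automaton accepts in phase~3 when $\beta = \diamond^n$ and $M_2$ is in an accepting state.

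The main point to verify is that $A$ behaves correctly in both regimes $|w| \geq n$ and $|w| < n$. This is where the buffer pays off: the letters of $w$ appearing on the top track are delayed by exactly $n$ positions before they must appear on the bottom, and any $\diamond$'s introduced on top (either during phase~2 when $|w|<n$, or during phase~3 when $|w|\geq n$) propagate through $\beta$ and drain at precisely the positions where the convolution demands $\diamond$ on top. Since $n$, $|M_2|$ and $|\Lambda|$ are all finite, $A$ has finitely many states, so $T$ is regular; combining this with the decomposition proves the lemma.
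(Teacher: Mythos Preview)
The paper does not prove this lemma; it is quoted from \cite{TY} without argument, so there is no in-paper proof to compare against.

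Your approach is sound. The decomposition $\conv(zw, zxw) = \phi(z)\cdot\conv(w, xw)$ is correct, and reducing to the regularity of $T = \{\conv(w, xw) \mid w \in L_2\}$ via closure under homomorphic image and concatenation is clean. The bounded-buffer idea for $T$ is exactly right: since $n = |x|$ is fixed, a sliding window of size $n$ suffices to check the shift-by-$n$ relationship between the two tracks while a copy of a DFA for $L_2$ runs on the top.

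There is one small gap in the automaton as you describe it. In phase~3 you do not forbid a non-$\diamond$ top symbol after a $\diamond$ has already appeared (you only enforce this in phase~2), and you never forbid $\diamond$ on the bottom track. Consequently your machine accepts strings that are not convolutions at all. For example, with $n=1$ and $x=c$, the string
\[
\binom{p}{c}\binom{\diamond}{p}\binom{q}{\diamond}\binom{\diamond}{q}
\]
is accepted whenever $pq \in L_2$: at the third letter the buffer head is $\diamond$, the bottom is $\diamond$, so the check passes and $q$ is pushed. But the top track $p\,\diamond\,q\,\diamond$ is not of the form $w\diamond^{k}$, so this string is not in $T$. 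The fix is immediate: carry the ``top has ended'' flag into phase~3 as well, and reject any $\diamond$ on the bottom track (for $\conv(w, xw)$ the bottom is always full length). Equivalently, intersect $L(A)$ with the regular language of valid convolutions whose second component is $n$ letters longer than the first. With either patch your argument is complete.
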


If the language of normal forms for a $k$-counter-graph automatic group is additionally quasigeodesic, it is proven in \cite{ET} that given a string of group generators of length $n$, the normal form word can be computed in time $O(n^{2k+2})$.  It is proven in \cite{WordProc} that this is true for any automatic group, and in \cite{KKM} that any graph-automatic group enjoys this property.  In \cite{ET} an example is given to show that a ${\mathcal C}$-graph automatic structure need not have a quasigeodesic normal form language.

\subsection{Thompson's group $F$}\label{sec:F}

Thompson's group $F$ can be equivalently viewed from three perspectives:
\begin{enumerate}
\item As the group defined by the infinite presentation
$$ {\mathcal P}_{inf}=\langle x_0,x_1, x_2\cdots | x_jx_i = x_ix_{j+1} \text{ whenever } i<j \rangle $$
or the finite presentation
$$ {\mathcal P}_{fin}=\langle x_0,x_1| [x_0^{-1}x_1,x_0^{-1}x_1x_0],[x_0^{-1}x_1,x_0^{-2}x_1x_0^2] \rangle. $$

\medskip

\item As the set of piecewise linear homeomorphisms of the interval $[0,1]$ satisfying
\begin{enumerate}
\item each homeomorphism has finitely many linear pieces,
\item all breakpoints have coordinates which are dyadic rationals, and
\item all slopes are powers of two.
\end{enumerate}

\medskip

\item As the set of pairs of reduced finite rooted binary trees with the same number of nodes, or carets.
\end{enumerate}
For the equivalence of these three interpretations of this group, as well as a more robust introduction to the group, we refer the reader to \cite{CFP}.

There is a standard normal form for elements of $F$ with respect to the infinite generating set, as discussed by Brown and Geoghegan \cite{BG}.  Any element $g \in F$ can be written uniquely, by applying the relations from the presentation ${\mathcal P}_{inf}$, as
\begin{equation}\label{eqn:inf-nf}
x_{i_0}^{e_0}x_{i_1}^{e_1} \cdots x_{i_m}^{e_m}x_{j_n}^{-f_n} \cdots x_{j_1}^{-f_1}x_{j_0}^{-f_0}
\end{equation}
where
\begin{enumerate}
\item $0 \leq i_0 < i_1 < i_2 < \cdots < i_m$ and $0 \leq j_0 < j_1 < j_2 < \cdots < j_n$,
\item $e_i,f_j > 0$ for all $i,j$, and
\item if $x_i$ and $x_i^{-1}$ are both present in the expression, then so is $x_{i+1}$ or $x_{i+1}^{-1}$.
\end{enumerate}
We will refer to this as the standard infinite normal form for $g \in F$; an expression in this form is called {\em reduced}.  If an expression of the form in \eqref{eqn:inf-nf} contains $x_i$ and $x_i^{-1}$ but not $x_{i+1}$ or $x_{i+1}^{-1}$ for some $i$, it is called {\em unreduced}.  Relations from ${\mathcal P}_{inf}$ must be applied to obtain an equivalent expression satisfying the third condition above, and then the resulting expression is reduced.

\section{A 1-counter language of normal forms for elements of $F$}\label{sec:Infnf}

In this section we define a language of normal forms, based on the standard infinite normal form for elements of $F$, which yields a $1$-counter graph automatic structure for $F$.  If $g \in F$, let $w$ denote the infinite normal form for $g$ given in \eqref{eqn:inf-nf}.  From the expression $w$, define a normal form over the finite alphabet $\{\#,a,b\}$ as follows.
\begin{enumerate}
\item First, we require that every generator between $x_0$ and $x_M$ appear twice
     in the expression in \eqref{eqn:inf-nf}, where $ M=\max\{i_m,j_n\}$.  To accomplish this, insert $x_t^0$ for any index $t \leq M$ not appearing in \eqref{eqn:inf-nf}. This yields a word of the form
\begin{equation}\label{eqn:inf-nf2}
 x_{0}^{r_0}x_{1}^{r_1}x_{2}^{r_2} \cdots x_{M}^{r_M}x_{M}^{-s_M} \cdots x_{2}^{-s_2}x_{1}^{-s_1}x_{0}^{-s_0}
 \end{equation}
where $r_i,s_i\geq 0$, exactly one of $r_M,s_M$ is nonzero, and $r_is_i>0$ implies $r_{i+1}+s_{i+1}>0$.

\item Second, rewrite the expression in \eqref{eqn:inf-nf2} in the form $$a^{r_0}b^{s_0}\#a^{r_1}b^{s_1}\#\dots \#a^{r_M}b^{s_M}$$
where $r_i,s_i\geq 0$, exactly one of $r_M,s_M$ is nonzero, and $r_is_j>0$ implies $r_{i+1}+s_{i+1}>0$.
\end{enumerate}

Define $\Linf$ to be the set of all such strings satisfying these conditions on $r_i$ and $s_i$, together with the empty string.

As an example, the element with standard infinite normal form $$x_1^2x_4^3x_8^{-1}x_5^{-6}x_4^{-2}$$
has the following representative in the language $\Linf$:
$$\#aa   \# \#  \#  aaabb \#   bbbbbb \#   \#   \#      b.$$

Note that if the largest index in the standard infinite normal form  string $w$ is $k$, then the normal form string in $\Linf$ representing $w$ contains exactly $k$ $\#$ symbols.

\begin{figure}[ht!]
\begin{center}
\tikzset{every state/.style={minimum size=2em}} 
\begin{tikzpicture}[->,>=stealth',shorten >=1pt,auto, node distance=3cm,
semithick]

\node[state, accepting, initial] (S) {$q_0$};
\node[state] (A) [right of=S] {};
\node[state] (B) [right of=A] {};
\node[state, accepting] (C) [above of=A] {$q_1$};
\node[state, accepting] (D) [below of=A] {$q_2$};
\node[state] (E) [above of=B] {};

\path (S)
edge node {$\#$} (A)
edge [bend left] node {$a$} (C)
edge [bend right] node {$b$} (D);

\path (A) edge [loop right] node {$\#$} (A)
edge [bend left] node {$a$} (C)
edge [bend left] node [left]{$b$} (D);

\path (B) edge [bend left]node {$b$} (D)
edge node {$a$} (C);

\path (C) edge [loop above] node {$a$} (C)
edge [bend left] node[left] {$\#$} (A)
edge node {$b$} (E);

\path (D) edge [loop below] node {$b$} (D)
edge [bend left] node[left] {$\#$} (A);

\path (E) edge [loop above] node {$b$} (E)
edge  node {$\#$} (B);

\end{tikzpicture}
\caption{A finite state automaton accepting the language $\Linf$. Accept states are $q_0,q_1,q_2$.}
\label{fig:fsaLinfty}
\end{center}
\end{figure}
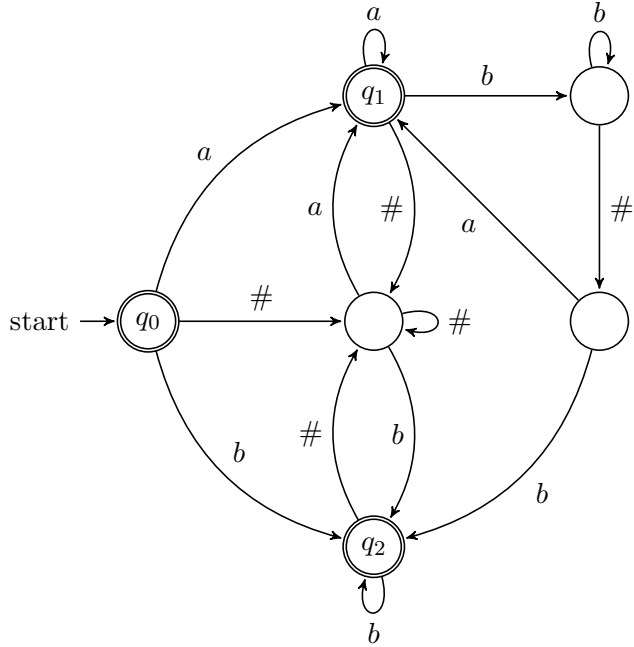

\begin{lemma}\label{lemma:regular-nf-inf}
The language $\Linf$ is a normal form for $F$, and is a regular language.
\end{lemma}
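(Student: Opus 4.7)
The plan has two parts: establish a bijection $F \leftrightarrow \Linf$, and then verify that the finite state automaton in Figure \ref{fig:fsaLinfty} accepts precisely $\Linf$. For the bijection, I would begin from the uniqueness of the Brown--Geoghegan standard infinite normal form \eqref{eqn:inf-nf}: each $g\in F$ admits a unique reduced expression of that form. The padding step (1), which inserts $x_t^0$ for every absent index $t\leq M$, is an invertible operation producing \eqref{eqn:inf-nf2}, and step (2) is a direct rewriting into the alphabet $\{a,b,\#\}$. The inverse is equally transparent: from a string in $\Linf$ one reads off $M$ as the number of $\#$ symbols, reads each pair $(r_i,s_i)$ from the $a$- and $b$-counts in the corresponding block, reassembles \eqref{eqn:inf-nf2}, and deletes trivial exponents to recover \eqref{eqn:inf-nf}. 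The identity of $F$ corresponds to the empty string. It then remains to check that the two conditions defining $\Linf$ are exactly the reducedness conditions of \eqref{eqn:inf-nf} in the new notation: the implication $r_is_i>0 \Rightarrow r_{i+1}+s_{i+1}>0$ translates condition (3) of \eqref{eqn:inf-nf} verbatim, while ``exactly one of $r_M,s_M$ is nonzero'' ensures that $M$ really is the maximum index occurring (both nonzero would force a higher index by condition (3); both zero would mean $M$ is too large).

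For regularity, I would verify the automaton in Figure \ref{fig:fsaLinfty}. The states track the local status of the current block: $q_0$ is the start state; the unlabeled state $A$ between $q_1$ and $q_2$ corresponds to ``just read a $\#$, the new block is still empty''; $q_1$ and $q_2$ correspond to being inside a block that so far contains only $a$'s, respectively only $b$'s; the state $E$ corresponds to a block with at least one $a$ followed by at least one $b$; and the state $B$ corresponds to ``just read a $\#$ after such a mixed block, the new block still empty''. The accepting set $\{q_0,q_1,q_2\}$ is precisely the set of configurations in which the word so far is either empty or ends in a block with exactly one of $r_M,s_M$ nonzero, which is the correct last-block condition for $\Linf$. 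A routine case analysis on transitions then confirms that the automaton accepts exactly $\Linf$.

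The main point requiring care is that the implication $r_is_i>0 \Rightarrow r_{i+1}+s_{i+1}>0$ is correctly enforced across each $\#$-boundary. This is the role of the asymmetry in the FSA: reading $\#$ from $E$ leads to the non-accepting state $B$ rather than back to $A$, and $B$ has no outgoing $\#$-transition and is itself non-accepting, so the subsequent block must receive at least one letter before any further $\#$ or termination. As an alternative to a direct trace of the automaton, one could express $\Linf$ as the intersection of the regular block-shape language $(a^*b^*\#)^*a^*b^*$ with two further regular constraints capturing the two side conditions, and invoke closure of regular languages under intersection.
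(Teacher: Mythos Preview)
Your proposal is correct and follows essentially the same approach as the paper: establish the bijection by observing that steps (1) and (2) are invertible rewritings of the Brown--Geoghegan normal form, and then verify that the automaton in Figure~\ref{fig:fsaLinfty} accepts exactly $\Linf$. Your state-by-state analysis of the machine, and in particular your explanation of why the asymmetric treatment of $E\to B$ versus $C,D\to A$ enforces the implication $r_is_i>0\Rightarrow r_{i+1}+s_{i+1}>0$, is more explicit than the paper's brief description; the alternative route via intersecting $(a^*b^*\#)^*a^*b^*$ with two further regular constraints is a valid shortcut the paper does not mention.
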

\begin{proof}
The procedure described above clearly gives a bijection between the set of standard infinite normal forms for elements of $F$ and the language $\Linf$.  To see that this language is regular, we give a finite state machine which accepts it in Figure~\ref{fig:fsaLinfty}. The automaton only accepts strings of the form $\varepsilon,a^i,b^i,w\#a^i,w\#b^i$ with $i>0$ and $w$ a word in $\{a,b,\#\}^*$  with no subword $ba$, and if $w$ has a subword $ab$ then any maximal substring of the form
$$a^{k_1}b^{k_2} \#a^{k_2}b^{l_2} \# \cdots \# a^{k_z}b^{l_z} \# $$
where all $k_i,l_i >0$ is immediately followed by $a^j\#$ or $b^j\#$.
\end{proof}

To prove that this language of normal forms is quasigeodesic we quote a proposition proven by Burillo in \cite{Burillo}.
\begin{prop}[\cite{Burillo}, Prop. 2] \label{lemma:Burillo}
Let $g \in F$ have standard infinite normal form
$$x_{i_1}^{e_1}x_{i_2}^{e_2} \cdots x_{i_m}^{e_m}x_{j_n}^{-f_n} \cdots x_{j_2}^{-f_2}x_{j_1}^{-f_1}$$
and define
$$D= \sum_{l=1}^m e_l + \sum_{l=1}^n f_l + i_m+j_n.$$
Then $\frac{D}{6}-2 \leq l(g) \leq 3D$ where $l(g)$ denotes the word length of $g$ with respect to the finite generating set $\{x_0,x_1\}$.
\end{prop}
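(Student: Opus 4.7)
My plan is to prove the upper and lower bounds separately. The upper bound is a short calculation from the infinite presentation, while the lower bound is the geometric heart of the statement.

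For the upper bound $l(g) \leq 3D$, I would use the identity $x_i = x_0^{-(i-1)} x_1 x_0^{i-1}$ for $i \geq 1$, which follows by induction on $i$ from the defining relations $x_j x_i = x_i x_{j+1}$ for $i<j$. Substituting into the standard infinite normal form, the positive part becomes
\[ x_0^{-(i_1-1)} x_1^{e_1} x_0^{i_1-i_2} x_1^{e_2} \cdots x_1^{e_m} x_0^{i_m-1}, \]
and the negative part is analogous. Since the indices are strictly increasing, the $x_0$-exponents in the positive part telescope to contribute $2(i_m-1)$ in absolute value, and similarly $2(j_n-1)$ arises on the negative side. The junction between the two portions contributes a further $|i_m-j_n|\leq i_m+j_n$. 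Together with the exponent sums, the word represented has length at most $2(i_m+j_n)+\sum e_l+\sum f_l\leq 2D$, which is sharper than the claimed bound.

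For the lower bound $l(g)\geq D/6-2$, I would track how $D$ behaves under right multiplication by a single generator. Viewing $F$ as a group of piecewise linear homeomorphisms of $[0,1]$, the quantity $D$ can essentially be read off the PL realization of $g$: the exponents $e_l,f_l$ record jumps in log-slope at specific breakpoints whose dyadic depths are encoded by the indices $i_l,j_l$. Because $x_0$ and $x_1$ each have only finitely many breakpoints and bounded log-slope, multiplying by a single generator can change $D$ by at most a fixed constant; showing that this constant is at most $6$ would give, by induction on word length, $D\leq 6\,l(g)+C$ for a small additive constant $C$, from which the claim follows.

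The main obstacle is the quantitative bound on the change of $D$ per generator multiplication. A case analysis is required to handle situations in which multiplying by $x_0^{\pm 1}$ or $x_1^{\pm 1}$ forces a nontrivial reduction to restore condition (3) of the standard normal form, potentially cascading through several indices and simultaneously altering both the exponent sums and the maxima $i_m, j_n$. One must show that even in the worst case the net change in $\sum e_l + \sum f_l + i_m + j_n$ is bounded by $6$. Once this bound is established, the lower bound follows immediately, whereas the upper bound is essentially formal.
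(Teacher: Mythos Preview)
The paper does not give its own proof of this proposition; it is simply quoted from Burillo's paper and then applied in the proof of Proposition~\ref{lemma:quasi-geo}. So there is nothing to compare against directly, and I will just assess your argument.

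Your upper bound is correct. Using $x_i=x_0^{-(i-1)}x_1x_0^{i-1}$ and telescoping the intermediate powers of $x_0$ gives a word of length at most $\sum e_l+\sum f_l+2\max\{i_m,j_n\}\le 2D$, which indeed sharpens the stated $3D$.

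The lower-bound strategy, however, has a genuine gap: the quantity $D$ does \emph{not} change by a bounded amount under right multiplication by a generator. Take $g=x_0^{-p}$, so that $D(g)=p$. Then
\[
gx_1^{-1}=x_0^{-p}x_1^{-1}=x_{p+1}^{-1}x_0^{-p},
\]
which is already the reduced infinite normal form, and $D(gx_1^{-1})=(p+1)+(p+1)=2p+2$. Thus $D$ jumps by $p+2$ in a single step. No cascading of condition~(3) reductions is involved here; the blow-up comes from the basic relation $x_0^{-1}x_j^{-1}=x_{j+1}^{-1}x_0^{-1}$, which increases indices each time $x_1^{-1}$ is pushed past a copy of $x_0^{-1}$. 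Your heuristic that ``$x_0,x_1$ have bounded log-slope, hence bounded effect on $D$'' fails because the contribution of $j_n$ to $D$ is a \emph{depth} parameter (how far down the dyadic tree the rightmost relevant breakpoint sits), and composing with a single generator can push a breakpoint arbitrarily deep when the existing map already has large slope there.

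Burillo's actual argument does not track $D$ directly. He works with the reduced tree-pair diagram for $g$ and shows that the number $N(g)$ of carets satisfies $|N(gx)-N(g)|\le 2$ for each generator $x$, yielding a linear lower bound for $l(g)$ in terms of $N(g)$; he then proves a two-sided comparison between $N(g)$ and $D$. If you want to repair your approach, the natural fix is exactly this: replace $D$ by a quantity (such as the caret count, or equivalently the number of breakpoints of the PL map) that genuinely has bounded variation under generator multiplication, and then compare that quantity to $D$.
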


Using Proposition \ref{lemma:Burillo} we prove the following proposition.

\begin{prop}\label{lemma:quasi-geo}
 The language  $\Linf$ is quasigeodesic.
\end{prop}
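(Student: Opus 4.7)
The plan is to bound $|u|_\Lambda$ for $u\in\Linf$ directly against Burillo's quantity $D$ from Proposition \ref{lemma:Burillo}, and then invoke the lower bound $l(g)\geq D/6-2$ to conclude.

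First, I would unpack the length of a word $u = a^{r_0}b^{s_0}\#a^{r_1}b^{s_1}\#\cdots\#a^{r_M}b^{s_M}\in\Linf$ representing $g\in F$. By construction $|u|_\Lambda = \sum_{i=0}^M(r_i+s_i) + M$. The $r_i$'s and $s_i$'s are obtained from the exponents $e_0,\ldots,e_m$ and $f_0,\ldots,f_n$ in the standard infinite normal form \eqref{eqn:inf-nf} of $g$ by padding with zeros, so $\sum_{i=0}^M(r_i+s_i) = \sum_{l=0}^m e_l + \sum_{l=0}^n f_l$. Moreover $M = \max\{i_m,j_n\} \leq i_m+j_n$, so
\[
|u|_\Lambda \;=\; \sum_{l=0}^m e_l + \sum_{l=0}^n f_l + M \;\leq\; \sum_{l=0}^m e_l + \sum_{l=0}^n f_l + i_m + j_n \;=\; D,
\]
where $D$ is the quantity appearing in Proposition \ref{lemma:Burillo}.

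Next I would apply Proposition \ref{lemma:Burillo}, which gives $D/6 - 2 \leq l(g)$, i.e.\ $D \leq 6\,l(g) + 12$. Combining this with the estimate above yields $|u|_\Lambda \leq 6\,l(g) + 12 \leq 12(l(g)+1)$ for every $g \in F$ (the case $g = e$, where $u$ is the empty word, is trivial). Hence $\Linf$ is quasigeodesic with constant $D = 12$.

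There is no real obstacle: the discrepancy between Burillo's $i_m+j_n$ and our $M$ goes the right way (we pay at most a factor of $2$ in the $\#$-count), and the $\sum e_l + \sum f_l$ contribution matches exactly. The minor bookkeeping point to be careful about is the indexing shift between equation \eqref{eqn:inf-nf} (starting at $i_0,j_0$) and Burillo's statement (starting at $i_1,j_1$), but this only changes the formula by constants absorbed into the $+1$ slack of the quasigeodesic definition.
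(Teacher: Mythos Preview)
Your proof is correct and follows the same approach as the paper: compute $|u|_\Lambda = \sum e_l + \sum f_l + M$ and compare it to Burillo's quantity $D$, then apply the lower bound $D/6 - 2 \leq l(g)$. In fact your argument is slightly sharper than the paper's, since you use $M = \max\{i_m,j_n\} \leq i_m + j_n$ to get $|u|_\Lambda \leq D$ directly, whereas the paper records the cruder estimate $|u|_\Lambda \leq 2D$ before invoking Burillo.
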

\begin{proof}
Let $g$ have standard infinite normal form $x_{i_1}^{e_1}x_{i_2}^{e_2} \cdots x_{i_m}^{e_m}x_{j_n}^{-f_n} \cdots x_{j_2}^{-f_2}x_{j_1}^{-f_1}$ which we alter as above to the expression in \eqref{eqn:inf-nf2}:
$$x_{0}^{r_0}x_{1}^{r_1} \cdots x_{M}^{r_M}x_{M}^{-s_M} \cdots x_{1}^{-s_1}x_{0}^{-s_0}$$
in which each index $0$ through $M= \max\{i_m,j_n\}$ appears twice, although possibly with zero exponent.  If $r_k \neq 0$ (resp. $s_k \neq 0$) then $r_k=e_i$ (resp. $s_k=f_i$), for some $i \leq M$.
The length of the resulting normal form for $g$ in the language $\Linf$ is then
$$\sum_{l=0}^M (r_l + s_l) + M =\sum_{l=1}^m e_l + \sum_{l=1}^n f_l +M = D'. $$
It is clear that $D \leq D' \leq 2D$, as $M= \max\{i_m,j_n\}$. Combining this with the inequality in Proposition \ref{lemma:Burillo} we obtain
$$\frac{D'}{12}-2 \leq \frac{D}{6}-2 \leq l(g) \leq 3D \leq 3D'$$
and hence $\Linf$ is a language of quasigeodesic normal forms for elements of $F$.
\end{proof}

Next we consider the multiplier languages  $L_{x_0^{-1}}$ and $L_{x_1^{-1}}$.    It is proven in \cite{ET} that for $\mathscr R=\{$regular languages$\}$ or $\mathscr C_1=\{$1-counter languages$\}$ we have $L_{x}\in \mathscr R$  (resp. $\mathscr C_1$) if and only if $L_{x^{-1}}\in \mathscr R$ (resp. $\mathscr C_1$).  Hence it suffices to consider only the multiplier languages  $L_{x_0^{-1}}$ and $L_{x_1^{-1}}$.

\begin{prop}\label{prop:xzero}
The language $L_{x_0^{-1}}=\{\otimes(u,v) \mid u,v\in \Linf, \overline ux_0^{-1}=_F\overline v\}$ is regular.
\end{prop}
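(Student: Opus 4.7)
The plan is to classify elements $u \in \Linf$ by the shape of their leading symbols and, in each case, describe the normal form $v$ of $\overline u x_0^{-1}$ as a constant-size edit of $u$. Regularity of $L_{x_0^{-1}}$ will then follow from Lemma~\ref{L:RegularLanguages} (applied also in its mirror form, with the two coordinates of the convolution alphabet swapped, which is itself a letter-to-letter homomorphism and so preserves regularity), together with the closure of regular languages under finite union.

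Writing $u = a^{r_0} b^{s_0} \# a^{r_1} b^{s_1} \# \cdots \# a^{r_M} b^{s_M}$, I split into five cases: (A) $s_0 \geq 1$; (B) $s_0 = r_0 = 0$; (C) $s_0 = 0$, $r_0 \geq 1$, $M = 0$; (D) $s_0 = 0$, $r_0 \geq 1$, $M \geq 1$, $r_1 + s_1 \geq 1$; and (E) $s_0 = 0$, $r_0 \geq 1$, $M \geq 2$, $r_1 = s_1 = 0$. In Cases (A), (B), and (D), appending $x_0^{-1}$ to the standard infinite normal form produces an already reduced expression, and $v$ is simply $u$ with a single $b$ inserted---after $b^{s_0}$ in (A), at the front in (B), or between $a^{r_0}$ and the first $\#$ in (D). In Case (C), $\overline u = x_0^{r_0}$ and $v = a^{r_0 - 1}$. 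In Case (E) the naive expression is unreduced; the identity $x_0 T x_0^{-1} = T'$, valid for any word $T$ in $\{x_i^{\pm 1} : i \geq 2\}^{*}$ (where $T'$ has every index shifted down by one), lets us absorb the trailing $x_0^{-1}$ into the expression, giving $v = a^{r_0 - 1} \# a^{r_2} b^{s_2} \# \cdots \# a^{r_M} b^{s_M}$; that is, $v$ is obtained from $u$ by deleting the length-two block $a\#$ sitting immediately before the second $\#$ of $u$.

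Each of the five sublanguages of $\Linf$ determined by the case conditions is regular, and each corresponding edit is of bounded length at a position locatable by a regular prefix of $u$. Cases (A), (B), and (D) then follow directly from Lemma~\ref{L:RegularLanguages} with $x=b$, and Case (C) contributes the regular set $\{(a,a)^k(a,\diamond) : k \geq 0\}$. The main obstacle is Case (E), where $v$ is two symbols shorter than $u$; this requires the mirror of Lemma~\ref{L:RegularLanguages}, which holds by the same argument after swapping the two coordinates of the convolution alphabet (equivalently, by allowing the finite-state control to buffer the two-symbol substring $a\#$ as it reads the convolution). Taking the union of the five regular pieces then gives regularity of $L_{x_0^{-1}}$.
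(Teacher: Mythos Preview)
Your proof is correct and follows essentially the same approach as the paper: a case analysis on the leading block of $u$, an explicit description of the bounded edit producing $v$, and an appeal to Lemma~\ref{L:RegularLanguages} (and its coordinate-swapped variant) together with closure under intersection with $\otimes(\Linf,\Linf)$ and finite union. Your five cases refine the paper's two, and in particular you separate out the degenerate situation $u=a^{r_0}$ (your Case~(C)) and record the correct formula $v=a^{r_0-1}\#a^{r_2}b^{s_2}\cdots$ in Case~(E); otherwise the arguments are the same.
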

\begin{proof}
Let $g \in F$ with $\Linf$ normal form $$u=a^{r_0}b^{s_0}\#a^{r_1}b^{s_1}\#\dots \#a^{r_M}b^{s_M}$$ and infinite normal form $w$. We consider two cases, depending on whether $wx_0^{-1}$ is already in infinite normal form, or must be simplified in order to obtain the infinite normal form.

First suppose that $wx_0^{-1}$ is in infinite normal form.  This occurs if and only if any one of the following conditions holds:
\begin{enumerate}
\item The expression $w$ contains no $x_0$ terms to a positive exponent, that is, $r_0=0$.
\item The expression $w$ contains $x_1$ to a nonzero power, that is, $r_1 \neq 0$ or $s_1 \neq 0$.
\item The expression $w$ contains $x_0$ to a negative power, that is, $s_0 \neq 0$.
\end{enumerate}
Write $u =a^{r_0} b^{s_0} \gamma  \in \Linf$ where $\gamma$ is empty or has initial letter $\#$. Then $v=a^{r_0} b^{s_0+1}\gamma$ is the word in $L_\infty$ corresponding to  $ux_0^{-1}$.

Next suppose that $wx_0$ is not in infinite normal form, so that none of the above conditions hold.  That is, $u=a^{r_0}\#\#a^{r_2}b^{s_2}\gamma$ where $r_0>0$ and $\gamma$ is either empty (and one of $r_2,s_2$ is zero) or begins with $\#$.
In this case $wx_0^{-1}$ is an unreduced expression and has the form $x_0^{e_0} x_i^{e_i} \cdots x_j^{-f_j} x_0^{-1}$ for some $i,j > 1$.  Rewrite this expression as $wx_0^{-1} = x_0^{e_0}  \beta x_0^{-1}$ where $\beta = x_i^{e_i} \cdots x_j^{-f_j}$.

From the infinite presentation for $F$ we have the relations $x_0x_{j+1}x_0^{-1} = x_j$ for $j \geq 1$.  We apply this repeatedly to the word $x_0^{e_0}  \beta x_0^{-1}$ to obtain $x_0^{e_0-1}  \beta' $ where $x_i$ in $\beta$ is replaced by $x_{i-1}$ to obtain $\beta'$.  As none of the exponents are altered in this process, as words in $\Linf$ we have
\begin{itemize}
\item $u = a^{r_0} \# \#  a^{r_2} b^{s_2} \gamma$, and
\item $v = a^{r_0} \#   a^{r_2} b^{s_2} \gamma$
\end{itemize}
where $v$ denotes the string in $\Linf$ corresponding to $wx_0^{-1}$.

Under either assumption, let $K$ be the set of convolutions $\otimes(u,v)$ where $u$ and $v$ have either form listed above, altered so that any string $\gamma$ is allowed to lie in the set $\{a,b,\#\}^*$.  It follows from Lemma~\ref{L:RegularLanguages} that $K$ is a regular language.  Then $L_{x_0^{-1}} = K \cap \otimes(\Linf,\Linf)$ is a regular language and accepts exactly those convolutions $\otimes(u,v)$ where $u,v \in \Linf$ and $\overline ux_0^{-1}=_F\overline v$.
\end{proof}

We now show that the multiplier language $L_{x_1^{-1}}$ is  a deterministic non-blind $1$-counter language. In Lemma~\ref{lem:notRegLinfty} we prove this language cannot be regular.

\begin{prop}\label{prop:xone}
The language $L_{x_1^{-1}}=\{\otimes(u,v) \mid u,v\in \Linf, \overline ux_1^{-1}=_F\overline v\}$ is a deterministic non-blind $1$-counter language.
\end{prop}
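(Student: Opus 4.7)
My plan is to emulate the case analysis of Proposition~\ref{prop:xzero}, isolating the single case in which a counter is needed. For $u\in\Linf$ with infinite normal form $w$, the product $wx_1^{-1}$ is already in infinite normal form when $r_1=0$ or $r_2+s_2>0$ (Case~A); otherwise $r_1>0$ and $r_2=s_2=0$, which forces $s_1=0$ by the reducedness of $u$, giving Case~B, which I subdivide according to whether $s_0=0$ (Case~B.1) or $s_0>0$ (Case~B.2).

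In Case~A one obtains $v$ from $u$ by the single change $b^{s_1}\mapsto b^{s_1+1}$, so the corresponding set of convolutions is regular by Lemma~\ref{L:RegularLanguages}. In Case~B.1 the relations $x_k^{-1}x_1^{-1}=x_1^{-1}x_{k-1}^{-1}$ and $x_k x_1^{-1}=x_1^{-1}x_{k-1}$ for $k\geq 3$ (both derived from $x_{k-1}x_1=x_1 x_k$) allow $x_1^{-1}$ to be commuted leftward through $w$, cancelling one factor of $x_1$ from $x_1^{r_1}$ and shifting every generator of index $\geq 3$ down by one; consequently $v$ differs from $u$ by a bounded local deletion inside block~$1$ (removing one $a$ and, where the ambient structure permits, one $\#$), producing an alignment shift of at most two characters. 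This too is a regular relationship, recognised by a finite-state automaton with a bounded buffer in its control. In Case~B.2 repeated use of $x_0^{-1}x_j^{-1}=x_{j+1}^{-1}x_0^{-1}$ gives $x_0^{-s_0}x_1^{-1}=x_{s_0+1}^{-1}x_0^{-s_0}$, so the normal form of $\bar u x_1^{-1}$ coincides with $w$ except that $s_{s_0+1}$ is incremented by one; thus $v$ is $u$ with one $b$ inserted into block $s_0+1$, extended by $s_0+1-M$ new empty blocks and a terminal $b$ when $s_0+1>M$.

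I would then construct a single deterministic non-blind $1$-counter automaton that reads $u$ and $v$ in parallel via the convolution, incrementing its counter once per $b$ read in block~$0$ (so that after the first $\#$ the counter records $s_0$), and using its finite control to decide, while reading block~$1$, which of the three cases applies: an $(a,\#)$ pair signals Case~B.1, a $(b,b)$ pair or insertion pair $(\#,b)$ or $(\diamond,b)$ signals Case~A, and a $(\#,\#)$ pair following all the $(a,a)$'s of block~$1$ signals Case~B.2. In Cases~A and~B.1 the automaton resets its counter to zero and hands off to a bounded-buffer shift-mode verification. In Case~B.2 it decrements the counter on each subsequent $(\#,\#)$ pair---or, once $u$ has terminated, on each $(\diamond,\#)$ pair---so the counter reaches zero exactly at the end of block $s_0$; it then reads the content of block $s_0+1$ of $u$, detects the insertion pair $(\#,b)$ or $(\diamond,b)$, and transitions into a one-symbol-buffer shift mode verifying $v[i]=u[i-1]$ for the remainder of the input.

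The main technical obstacle will be synchronising the counter in Case~B.2 so that zero is reached at the correct moment in both the subcase $s_0+1\leq M$ (insertion inside $u$) and the subcase $s_0+1>M$ (insertion after $u$ has terminated), and ensuring that each case-distinguishing transition is triggered by an unambiguous symbol pair. Case~B.1 is conceptually easier but requires some care with edge cases such as $M=1$, where the ambient $\#\#$ structure of Case~B does not appear in $u$ and the shift magnitude drops from two to one.
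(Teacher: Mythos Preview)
Your case split is wrong at the outset. You claim that whenever $r_1=0$ or $r_2+s_2>0$ (your Case~A) the product $wx_1^{-1}$ is already in infinite normal form and $v$ differs from $u$ only by $b^{s_1}\mapsto b^{s_1+1}$. But this ignores the exponent $s_0$: if $s_0>0$ then $w$ ends in $x_0^{-s_0}$, and $wx_1^{-1}$ ends in $x_0^{-s_0}x_1^{-1}$, which is \emph{never} in normal form regardless of $r_1,r_2,s_2$. Concretely, take $u=b^p$ (so $w=x_0^{-p}$, $r_1=0$, hence your Case~A). Your recipe gives $v=b^p\#b$, i.e.\ $x_1^{-1}x_0^{-p}$; but $x_0^{-p}x_1^{-1}=x_{p+1}^{-1}x_0^{-p}$, so the correct $v$ is $b^p\#^{p+1}b$. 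This is precisely the family used in Lemma~\ref{lem:notRegLinfty} to show that $L_{x_1^{-1}}$ is not regular, and it sits inside what you assert is a regular sublanguage.

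The paper's proof makes $s_0=0$ versus $s_0\neq 0$ the \emph{primary} case distinction. When $s_0\neq 0$ one must push the new $x_1^{-1}$ leftward through the entire negative half of $w$, and each block $x_{j_k}^{-f_k}$ it passes raises its index by $f_k$; the counter tracks the running value $1+f_0+\cdots+f_{t-1}-t$ and detects when (and whether) this quantity first hits zero. Your Case~B.2 captures only the first step of this process---pushing past $x_0^{-s_0}$---and asserts the new generator lands at index $s_0+1$; in general it must continue past $x_{j_1}^{-f_1},x_{j_2}^{-f_2},\ldots$ and may even reach the positive half and cancel (the paper's Case~2.1 with $R=M$ and the analogue of your Case~B.1). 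So both the location of the counter and the quantity it must compute are different from what you propose.
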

\begin{proof}
Let $g \in F$ have $\Linf$ normal form $$u=a^{r_0}b^{s_0}\#a^{r_1}b^{s_1}\#\dots \#a^{r_M}b^{s_M}$$ and infinite normal form $w$.  We will always use $v$ to denote the representative in $\Linf$ of $gx_1^{-1}$.
We  consider several cases depending on the value of certain exponents.

\bigskip

\noindent Case 1: Suppose first that $s_0 = 0$, that is, the infinite normal form $w$ for $g$ does not contain $x_0$ to a negative exponent. We consider three subcases.

\medskip

\noindent
Case 1.1:  If $u=a^{r_0}$ for $r_0\geq 0$, then $v=a^{r_0}\#b$.

\medskip

\noindent
Case 1.2: If $u$ contains at least one $\#$ symbol, then $wx_1^{-1}$ is the infinite normal form for $gx_1^{-1}$ if at least one of the following conditions holds:
\begin{enumerate}
\item The expression $w$ contains no $x_1$ terms to a positive exponent, that is, $r_1=0$.
\item The expression $w$ contains an $x_1$ to a negative power, that is, $s_1 \neq 0$.
\item The expression $w$ contains $x_2$ to a nonzero power, that is, $r_2 \neq 0$ or $s_2 \neq 0$.
\end{enumerate}
If we write $u = a^{r_0} \# a^{r_1} b^{s_1} \gamma \in \Linf$, where $\gamma$  is empty or begins with $\#$,
then $v = a^{r_0} \#  a^{r_1} b^{s_1+1} \gamma$.

\medskip

\noindent
Case 1.3:  If none of the previous conditions hold  we must have $u=a^{r_0}\#a^{r_1}\gamma \in \Linf$ with $r_1>0$ and $\gamma$ either empty or $\gamma = \#\# \gamma'$. The corresponding infinite normal form is then $w= x_0^{r_0} x_1^{r_1} \eta$ where $\eta$ is either empty or  $\eta=x_i^{r_i} \cdots x_j^{-s_j}$ for some $i,j > 2$ and $r_k,s_l>0$.  Then $wx_1^{-1}= x_0^{r_0} x_1^{r_1} \eta x_1^{-1}$.  As in Proposition \ref{prop:xzero}, this simplifies to $wx_1^{-1}=x_0^{r_0}x_1^{r_1-1} \eta'$, where $x_i^{\pm 1}$ in $\eta$ is replaced by $x_{i-1}^{\pm 1}$ to obtain $\eta'$. Since $\eta$ and $\eta'$ have the same sequence of exponents, we have, for $r_1>0$
\begin{itemize}
		\item $u = a^{r_0} \#  a^{r_1} \gamma$, and
		\item \begin{enumerate}
\item $v =  a^{r_0} \#  a^{r_1-1} $ if $r_1>1$ and $\gamma$ is empty,  \item $v=a^{r_0}$ if $r_1=1$ and $\gamma$ is empty \item $v =  a^{r_0} \#  a^{r_1-1} \# \gamma'$ if $r_1>1$ and $\gamma=\#\# \gamma'$.  \end{enumerate}
	\end{itemize}

Let $K$ denote the regular language of convolutions $\otimes(u,v)$ arising from Case 1, with the string $\gamma$ replaced by any string in $\{a,b,\#\}^*$.  Then $K$ is the union of four languages, splitting Case 1.3 according to whether $\gamma$ is empty or not.  The languages of pairs $\otimes(u,v)$ arising from Cases 1.1 or 1.3 with $\gamma$ empty are clearly regular, and it follows from Lemma \ref{L:RegularLanguages} that the languages arising from Cases 1.2 and 1.3, with $\gamma$ nonempty in the latter, are also regular.  Thus $K$ is a regular language, and $L_{s_0=0}= K \cap \otimes(\Linf,\Linf)$ is a regular language, and contains exactly those convolutions of strings covered by Case 1.

\medskip

\noindent
Case 2: Next suppose that $s_0 \neq 0$, so $w$ ends in $x_0^{-1}$, and hence $wx_1^{-1}$ is not the infinite normal form for $gx_1^{-1}$.  We will describe $L_{s_0 \neq0}$, which is the set of all strings $\otimes(u,v) \in L_{x_1^{-1}}$ in which $u$ satisfies $s_0 \neq 0$.

We will apply the relation  $x_i^{-1}x_j^{-1} = x_{j+1}^{-1}x_i^{-1}$  for $i<j$ repeatedly, to ``push" the final $x_1^{-1}$ to the left in this expression, at the ``cost" of increasing its index.

Let
\begin{equation}\label{eqn:infnf}
w=x_{i_0}^{e_0}x_{i_1}^{e_1} \cdots x_{i_m}^{e_m}x_{j_n}^{-f_n} \cdots x_{j_1}^{-f_1}x_{j_0}^{-f_0}
\end{equation}
be the infinite normal form for $g$, where $f_0=s_0 \neq0$.
Applying the relation above $f_0$ times to the expression $wx_1^{-1}$ yields
$$x_{i_0}^{e_0}x_{i_1}^{e_1} \cdots x_{i_m}^{e_m}x_{j_n}^{-f_n} \cdots x_{j_2}^{-f_2}x_{j_1}^{-f_1}\left(x_{1+f_0}\right)^{-1}x_0^{-f_0}.$$
If $1+f_0 \leq j_1$ then this process is completed, and we must determine if the resulting word is reduced, or can be simplified further.  If $1+f_0 > j_1$ then we can apply the relation again $f_1$ times to obtain
$$ x_{i_0}^{e_0}x_{i_1}^{e_1} \cdots x_{i_m}^{e_m}x_{j_n}^{-f_n} \cdots x_{j_2}^{-f_2}\left(x_{1+f_0+f_1}\right)^{-1}x_{j_1}^{-f_1}x_0^{-f_0}.$$
We continue applying this relation until the first time we obtain $x_R^{-1}$, where either
\begin{itemize}
\item $R=1+f_0+f_1+\dots +f_n>j_n$, or
\item $R=1+f_0+f_1+\dots +f_t \leq j_{t+1}$ for some $1 \leq t \leq n-1$.
\end{itemize}

\bigskip

\noindent
Case 2.1: Suppose that $R>j_n$.  We must consider the relative values of $R$ and $M= \max\{i_m,j_m\}$.

First assume that $R>M$.  Then the infinite normal form expression for $gx_1^{-1}$ is
$$x_{i_0}^{e_0}x_{i_1}^{e_1} \cdots x_{i_m}^{e_m}x_R^{-1} x_{j_n}^{-f_n} \cdots x_{j_2}^{-f_2}x_{j_1}^{-f_1}x_0^{-f_0}$$
with $f_0 \neq 0$.  It follows that the representatives of the elements $g$ and $gx_1^{-1}$ in $\Linf$ are:
\begin{itemize}
\item $u=a^{r_0}b^{s_0}\gamma$ where $\gamma$ is either empty or starts with $\#$, ends with $a$ or $b$ and contains exactly $M$ $\#$ symbols.
\item $v=a^{r_0}b^{s_0} \gamma \#^{R-M}b$. Note that the number of $\#$ symbols in this word is $R$, as required.
\end{itemize}

The deterministic non-blind $1$-counter automaton given in Figure \ref{fig:L4} accepts exactly those strings of the form $\otimes(a^{r_0}b^{s_0} \gamma, a^{r_0}b^{s_0} \gamma \#^{R-M} b)$ in which the the suffix of the convolution has the correct number of ${ \# \choose \# }$ symbols and $\gamma \in \{a,b,\#\}^*$.  We denote this language by $L'$.  The automaton operates as follows.
\begin{itemize}\item
After reading ${a\choose a}^i {b\choose b}$ the value of the counter is set to $2$, and the automaton is in state $q_1$.
\item  For each ${b\choose b}$ read the counter is increased by 1, and for each ${\#\choose\#}$ the counter is decremented by 1, so that after reading a prefix containing $p$ copies of the symbol ${\#\choose\#}$, the value of the counter is equal to \[
1+f_0+f_1+\dots + f_{p-1}-p.\]
\item If the counter returns to zero while at state $q_1$ after having read $p$ copies of the symbol ${\#\choose\#}$, we are not in Case 2.1 and the input is rejected, since the edge leaving $q_1$ for state $q_2$ checks that the counter is positive.
\item Once the automaton has read the string $\otimes(a^{r_0}b^{s_0} \gamma, a^{r_0}b^{s_0} \gamma)$, the value of the counter is $1+f_0+f_1+\dots + f_n-M=R-M$ since we have read all the ${b\choose b}$ letters in $\otimes(a^{r_0}b^{s_0} \gamma, a^{r_0}b^{s_0} \gamma)$ and $M$ ${\#\choose \#}$ letters.  Recall that $M=\max\{i_m,j_n\}$.
From here
the input is accepted precisely if the remaining letters to be read are $\otimes(\epsilon, \#^{R-M}b)$, as verified by the automaton.
\end{itemize}
Then $L_{R>M} = L' \cap \otimes(\Linf,\Linf)$ is a deterministic non-blind $1$-counter automaton which accepts exactly those strings from Case 2.1 which satisfy $R>M$.

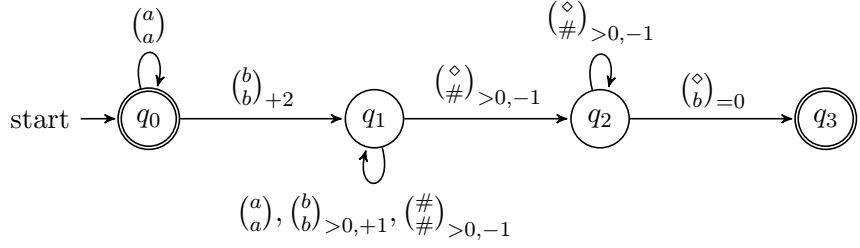
\begin{figure}[ht!]
\begin{center}
\tikzset{every state/.style={minimum size=2em}} 
\begin{tikzpicture}[->,>=stealth',shorten >=1pt,auto, node distance=3cm,
semithick]

\node[state, accepting, initial] (S) {$q_0$};
\node[state] (A) [right of=S] {$q_1$};
\node[state] (B) [right of=A] {$q_2$};
\node[state, accepting] (C) [right of=B] {$q_3$};

\path (S)
edge node {${b\choose b}_{+2}$} (A)
edge [loop above] node {${a\choose a}$} (S);

\path (A) edge [loop below] node {${a\choose a}, {b\choose b}_{>0,+1}, {\#\choose \#}_{>0, -1}$} (A)
edge node {${\diamond\choose \#}_{>0, -1}$} (B);

\path (B) edge node {${\diamond\choose b}_{=0}$} (C)
edge [loop above] node {${\diamond\choose \#}_{>0, -1}$} (B);

\end{tikzpicture}
\caption{A deterministic non-blind $1$-counter automaton used in Case 2.1 of the proof of Proposition \ref{prop:xone} when $R>M$.   The start state is $q_0$ and the accept state is $q_3$.}

\label{fig:L4}
\end{center}
\end{figure}

If $R=M$, which can occur if and only if $R=i_m$, we claim that the infinite normal form for $gx_1^{-1}$ is either
\begin{equation}\label{eq:em>1}
x_{i_0}^{e_0}x_{i_1}^{e_1} \cdots x_{i_{m}}^{e_m-1}x_{j_n}^{-f_n} \cdots x_{j_2}^{-f_2}x_{j_1}^{-f_1}x_0^{-f_0} \text{ if }e_m>1, \text{ or }
\end{equation}
\begin{equation}\label{eq:em=1}x_{i_0}^{e_0}x_{i_1}^{e_1} \cdots x_{i_{m-1}}^{e_{m-1}}x_{j_n}^{-f_n} \cdots x_2^{-f_2}x_{j_1}^{-f_1}x_0^{-f_0} \text{ if } e_m =1,
\end{equation}
that is, in \eqref{eq:em=1} we have $i_{m-1} \neq j_{n}$ and hence there is no additional cancelation of terms through application of relations from ${\mathcal P}_{inf}$.  We justify this statement as follows.

If $i_{m-1} = j_{n}$, as we began with a reduced expression in the infinite normal form for $g$, we must have $i_m = i_{m-1}+1 = j_n+1$.   In Case 2.1, it is always true that $$1+f_0 + \cdots + f_b \geq j_{b+1}+1$$ for all $0 \leq b \leq n$.  So $$1 + f_0 + f_1 + \cdots f_{n-1} \geq j_n +1$$ and it follows that $$R=1+f_0+f_1+\cdots f_n \geq j_n+2=i_m+1,$$ in which case $R \neq i_m$.  Thus  $i_{m-1} \neq j_{n}$ and \eqref{eq:em=1} is the infinite normal form for $gx_1^{-1}$.

Again letting $\overline{u}x_1^{-1} =_F \overline{v}$, in this case we have
\begin{itemize}
\item $u=\gamma \#^s a^{e_m}$, where $\gamma$ ends in $a$ or $b$ and
\item $v=\gamma \#^s a^{e_m-1}$ when $e_m>1$ and $v=\gamma \ $ otherwise.
\end{itemize}

The $1$-counter automaton in Figure \ref{fig:R=M} accepts exactly those convolutions of elements of the above forms with $\gamma \in \{a,b,\#\}^*$.  The top line of states and transitions is followed when $e_m>1$ and the bottom line when $e_m=1$.  Note that the value of the counter must equal $0$ when the difference in the words is detected by the automaton.  Intersecting the language accepted by this machine with $\otimes(\Linf,\Linf)$ yields a language $L_{R=M}$ consisting exactly of those convolutions of strings accepted in Case 2.1 with $R=M$.

\begin{figure}[ht!]
\begin{center}
\scalebox{.8}{
\tikzset{every state/.style={minimum size=2em}} 
\begin{tikzpicture}[->,>=stealth',shorten >=1pt,auto, node distance=4cm,
semithick]

\node[state, initial] (S) {$q_0$};
\node[state] (A) [right of=S] {$q_1$};
\node[state] (B) [right of=A] {$q_2$};
\node[state] (C) [right of=B] {$q_3$};
\node[state, accepting] (D) [below of=C] {$q_4$};
\node[state] (E) [below of = A] {$q_5$};
\node[state, accepting] (F) [right of = E] {$q_6$};

\path (S)
edge [loop above] node {${a\choose a}$} (S)
edge node {${b \choose b}_{+2}$} (A);

\path (A)
edge node {${\#\choose \#}_{>0,-1}$} (B)
edge [loop above] node {${a\choose a}_{>0},{b\choose b}_{>0,+1},{\#\choose \#}_{>0,-1}$} (A)
edge node {${\#\choose \lozenge}_{>0,-1}$} (E);

\path (B)
edge [loop above] node {${\#\choose \#}_{>0,-1}$} (B)
edge node {${a \choose a}_{=0}$} (C);

\path (C)
edge [loop above] node {${a \choose a}$} (C)
edge node {${a \choose \lozenge}$} (D);

\path (E)
edge [loop left] node {${\#\choose \lozenge}_{>0,-1}$} (E)
edge node {${a \choose \lozenge}_{=0}$} (F);
\end{tikzpicture}
}
\caption{A deterministic non-blind $1$-counter automaton used in in Case 2.1 of the proof of Proposition \ref{prop:xone} when $R=M$. The start state is $q_0$ and accept states are $q_4$ and $q_6$.  The top path is followed when $e_m>1$ and the bottom when $e_m=1$.}
\label{fig:R=M}
\end{center}
\end{figure}
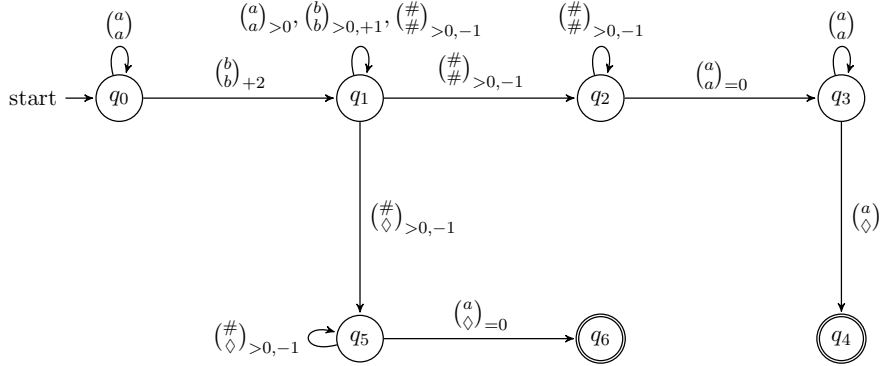

If $R<M$ then we must have $M=i_m$ and we consider three scenarios for the strings $u,v \in \Linf$.
\begin{enumerate}
\item The generator $x_R$ does not appear in the infinite normal form for $g$, and hence we have
    \begin{itemize}
    \item $u=\gamma \# a^{r_{R-1}}b^{s_{R-1}} \# \# \eta$, and
    \item $v=\gamma \# a^{r_{R-1}}b^{s_{R-1}} \# b\# \eta$.
    \end{itemize}
\item The generator $x_R$ does appear in the infinite normal form for $g$, as does $x_{R+1}$ and hence we have
\begin{itemize}
    \item $u=\gamma \# a^{r_{R}} \# a^{r_{R+1}} \# \eta$, and
    \item $v=\gamma \# a^{r_{R}} b\# a^{r_{R+1}} \# \eta$.
    \end{itemize}
\item The generator $x_R$ does appear in the infinite normal form for $g$ but  $x_{R+1}$ does not, and hence we have
\begin{itemize}
    \item $u=\gamma \# a^{r_{R}} \#  \# \eta$, and
    \item $v=\gamma \# a^{r_{R}-1} \# \eta$
    \end{itemize}
    as in Case 1.3.
\end{enumerate}
In all three cases above,  $\eta \subset \{a,\#\}^*$.

A finite state machine accepting convolutions of the above strings must check that the difference in the strings comes at the correct position; for this we require a single counter.

We now build a $1$-counter machine which accepts the language of all strings $\otimes(u,v)$ as above with $\gamma$ and $\eta$ replaced with an arbitrary string from $\{a,b,\#\}^*$.  Each $\otimes(u,v)$ in this language is the concatenation of a string from a prefix language with a string from a suffix language.  The prefix language is a non-blind deterministic $1$-counter language based on the same counter instructions as in Figure \ref{fig:R=M}, and below we explain why the counter must have value $0$ to transition between the two languages.  The suffix languages, one for each type above, are all regular according to Lemma \ref{L:RegularLanguages}.  The prefix and suffix languages are given in the following table, where $\gamma$ and $\eta$ denote words in $\{a,b,\#\}^*$.

\medskip
\begin{center}
\begin{tabular}{|l|l|l|l|}
\hline
Type of pair & Prefix Language & Suffix Language & FSA accepting \\
 & & & Suffix Language \\
\hline
(1) & $\{ \otimes(\gamma  a^nb^k \#\#,\gamma a^nb^k \#b)\}$  & $\{\otimes(\eta,\#\eta)\}$ & $M_1$  \\
 & with $n,k \geq 0$ & & \\
\hline
(2) & $\{ \otimes(\gamma \# a^n  \#,\gamma \# a^n b)\}$  & $S_2 = \{\otimes(a^k \#\eta,\#a^k\#\eta)\}$ &  $M_2$ \\
& with $n \geq 1$ &  with $k>0$ & \\
\hline
(3) & $\{ \otimes(\gamma \# a^n,\gamma \# a^n)\}$  & $\{\otimes(a \#\# \eta,\#\eta)\}$ & $M_3$ \\
 & with $n \geq 0$ & & \\
\hline
\end{tabular}
\end{center}

\medskip

Figure \ref{fig:R<M} presents a $1$-counter automaton which accepts the language of convolutions of concatenations of a prefix and suffix from the above table.  Any arrow terminating at a state labeled $M_i$ for $i=1,2,3$ is assumed to terminate at the start state of that machine.  The $\epsilon$ edges can be removed but are used to give the simplest depiction of the machine.

This automaton initially operates with the same counter instructions as in Figures \ref{fig:L4} and \ref{fig:R=M}, that is, ${b \choose b}_{+2}$ initially  followed by ${b \choose b}_{>0,+1}$ and ${\# \choose \#}_{>0,-1}$, as in Figures \ref{fig:L4} and \ref{fig:R=M}. Note that after reading a prefix in which there are $p$ copies of the symbol ${\#\choose\#}$, the value of the counter is equal to \[
1+f_0+f_{i_1}+\dots + f_{i_{p-1}}-p.\] In the case $R<M$, we have $1+f_0+f_1+\dots + f_{p-1}>p$ for all $p$, and we have $R = 1+f_0+f_1+\dots + f_n$.  If $\otimes(u,v)$ arises in the case $R<M$, the value of the counter will first equal $0$ when $R$ symbols of the form ${\# \choose \# }$ have been read.  If we divide $\otimes(u,v)$ into a prefix $p$ and suffix $s$ using the above prefix and suffix languages, the counter will first equal zero after the final ${\# \choose \# }$ in $p$ has been read.  Hence we verify in the machine in Figure \ref{fig:R<M} that the value of the counter has value $0$ after the final ${\# \choose \# }$ in the prefix word before transitioning to the suffix word.

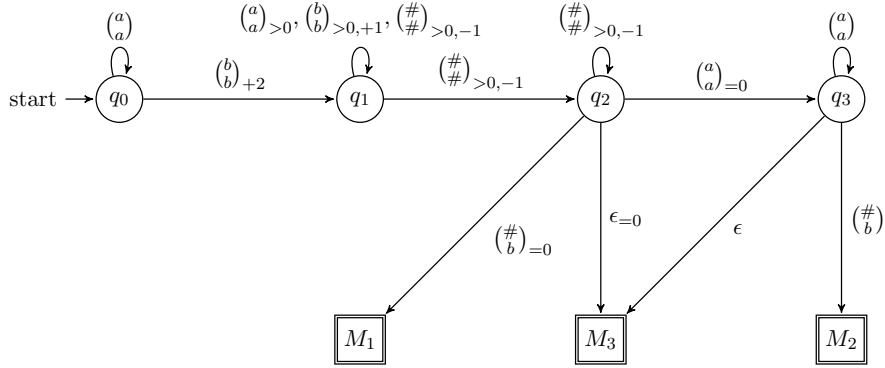
\begin{figure}[ht!]
\begin{center}
\scalebox{.8}{
\tikzset{every state/.style={minimum size=2em}} 
\begin{tikzpicture}[->,>=stealth',shorten >=1pt,auto, node distance=4cm,
semithick]

\node[state, initial] (S) {$q_0$};
\node[state] (A) [right of=S] {$q_1$};
\node[state] (B) [right of=A] {$q_2$};
\node[state] (C) [right of=B] {$q_3$};
\node[state, accepting,shape=rectangle] (E) [below of = A] {$M_1$};
\node[state, accepting,shape=rectangle] (G) [below of = B] {$M_3$};
\node[state, accepting,shape=rectangle] (D) [below of =C] {$M_2$};

\path (S)
edge [loop above] node {${a\choose a}$} (S)
edge node {${b \choose b}_{+2}$} (A);

\path (A)
edge node {${\#\choose \#}_{>0,-1}$} (B)
edge [loop above] node {${a\choose a}_{>0},{b\choose b}_{>0,+1},{\#\choose \#}_{>0,-1}$} (A);

\path (B)
edge [loop above] node {${\#\choose \#}_{>0,-1}$} (B)
edge node {${\# \choose b}_{=0}$} (E)
edge  node {$\epsilon_{=0}$} (G)
edge node {${a \choose a}_{=0}$} (C);

\path (C)
edge [loop above] node {${a \choose a}$} (C)
edge node {$\epsilon$} (G)
edge node {${\# \choose b}$} (D);
\end{tikzpicture}
}
\caption{A deterministic non-blind $1$-counter automaton which verifies that the change in $u$ and $v$ from the case $R<M$ occurs at the correct position in the word.  The start state is $q_0$ and accept states are the accept states from the finite state machines $M_1,M_2$ and $M_3$.  An arrow terminating at one of these machines is understood to terminate at the start state of the machine.}
\label{fig:R<M}
\end{center}
\end{figure}

Let $L_{R<M}$ denote the intersection of the language accepted by the machine in Figure \ref{fig:R<M} with $\otimes(\Linf,\Linf)$.  Then $L_1 = L_{R<M} \cup L_{R=M} \cup L_{R>M}$ is the language consisting exactly of those convolutions of strings described in Case 2.1.

\bigskip

\noindent
Case 2.2: For the remaining case, suppose that $1+f_0+f_1+\dots +f_{t-1} =t$ for some $t\leq j_n$.  Beginning with the expression in \eqref{eqn:infnf} for the infinite normal form of $g$, we can write the infinite normal form for $gx_1^{-1}$ as
\begin{equation}\label{eqn:x1-intermediatestep}
x_0^{e_0} \cdots x_{i_m}^{e_m}x_{i_n}^{-f_n} \cdots x_{j_{w+1}}^{-f_{w+1}} x_t^{-1} x_{j_w}^{-f_w} \cdots  x_0^{-f_0}.
\end{equation}
We again consider subcases, depending on whether or not the expression in \eqref{eqn:x1-intermediatestep} is the infinite normal form for $gx_1^{-1}$.  In each case, we show that the language $\{ \otimes(u,v)\}$ of accepted words in that case is the concatenation of a prefix language and a suffix language.  The suffix language is always a regular language.  The prefix language is always a deterministic non-blind $1$-counter language.  While it first seems like the language of all possible prefixes is also regular, we must use a counter to ensure that the difference between the strings $u$ and $v$ occurs at the proper place in the string.  We again use the counter instructions ${b \choose b}_{+2}$ followed by ${b \choose b}_{>0,+1}$ and ${\# \choose \#}_{>0,-1}$, as in Figures \ref{fig:L4} and \ref{fig:R=M}. Note that after reading a prefix in which there are $p$ copies of the symbol ${\#\choose\#}$, the value of the counter is equal to \[
1+f_0+f_{i_1}+\dots + f_{i_{p-1}}-p.\]
When  $1+f_0+f_{i_1}+\dots + f_{i_t}-t=0$, we are in the position to read the additional $x_t$ letter introduced by permuting the $x_1^{-1}$ past generators with smaller indices using the group relations.  The prefix string always terminates with the difference resulting from the newly introduced generator $x_t^{-1}$.  In Case 2.2.1, this difference is the removal of an $``a"$ symbol to obtain the reduced expression for $gx_16{-1}$.  In Case 2.2.2, the prefix string always terminates with the symbol ${ \# \choose b }$ where the $``b"$ corresponds to the $x_t^{-1}$.  Thus we check that the value of the counter is $0$ before transitioning to the suffix language.

\noindent
Case 2.2.1: If \eqref{eqn:x1-intermediatestep} is not the infinite normal form for $gx_1^{-1}$, then $t \neq j_{w+1}$, there is an index $p\leq m$ so that $i_p=t$, and $x_{t+1}$ is not present in the normal form for $g$ to any non-zero power, that is, $i_{p+1} \neq t+1$ and $j_{w+1} \neq t+1$. This case is analogous to Case 1.3 above, and we can write
\begin{itemize}
\item $u=\gamma a^{r_t} \# \# \eta$, and
\item $v= \gamma a^{r_t-1} \# \eta$.
\end{itemize}
Note that each $\otimes(u,v)$ of the above form can be written as a prefix $\otimes(\gamma a,\gamma \#)$ and a suffix from the language $\{ \otimes( \# \# \eta, \eta)\}$.  If we replace $\gamma$ and $\eta$ with any strings from $\{a,b,\#\}^*$, then it follows from Lemma \ref{L:RegularLanguages} that the language of all possible suffixes is regular. Let $M$ denote the finite state machine accepting these suffixes.  As explained above, we require that the value of the counter be $0$ to transition from a prefix word to a suffix word.

Figure \ref{fig:case2.2.1} contains a deterministic non-blind $1$-counter automaton accepting concatenations of prefix and suffix words of this form.  Let $L_{2.2.1}$ be the intersection of the language accepted by this machine with $\otimes(\Linf,\Linf)$.  Then $L_{2.2.1}$ is exactly the set of convolutions described in Case 2.2.1.

\begin{figure}[ht!]
\begin{center}
\scalebox{.8}{
\tikzset{every state/.style={minimum size=2em}} 
\begin{tikzpicture}[->,>=stealth',shorten >=1pt,auto, node distance=4cm,
semithick]

\node[state, initial] (S) {$q_0$};
\node[state] (A) [right of=S] {$q_1$};
\node[state, shape=rectangle] (B) [right of=B] {$M$};

\path (S)
edge [loop above] node {${a\choose a}$} (S)
edge node {${b \choose b}_{+2}$} (A);

\path (A)
edge node {${a\choose \#}_{=0}$} (B)
edge [loop above] node {${a\choose a}_{\geq 0},{b\choose b}_{>0,+1},{\#\choose \#}_{>0,-1}$} (A);

\end{tikzpicture}
}
\caption{A deterministic non-blind $1$-counter automaton which accepts exactly those strings from Case 2.2.1 with $\gamma,
\eta \in \{a,b,\#\}^*$.  The start state is $q_0$ and the accept states are the accept states of $M$.  Any arrow terminating at $M$ is assumed to terminate at the start state of $M$.}
\label{fig:case2.2.1}
\end{center}
\end{figure}
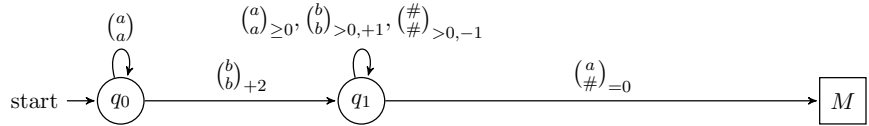

\noindent
Case 2.2.2: Now suppose that Equation \eqref{eqn:x1-intermediatestep} is the infinite normal form for $gx_1^{-1}$.  This occurs in three ways:
\begin{enumerate}
\item If $x_t^{-1}$ is already in the normal form of $g$, then
\begin{itemize}
\item $u=\gamma \# a^{r_t} b^{s_t} \# \eta$ with $s_t \neq 0$, and
\item $v= \gamma \# a^{r_t} b^{s_t+1} \# \eta$.
\end{itemize}

\medskip

\item If $x_t^{-1}$ is not in the infinite normal form for $g$, but $x_t$ and either $x_{t+1}$ or $x_{t+1}^{-1}$ are present, then
\begin{itemize}
\item $u=\gamma \# a^{r_t}  \# a^{r_{t+1}} b^{s_{t+1}} \# \eta$ with $r_t>0$ and $r_{t+1} +s_{t+1} > 0$, and
\item $v= \gamma \# a^{r_t}  b \# a^{r_{t+1}} b^{s_{t+1}} \# \eta$.
\end{itemize}

\medskip

\item If both $x_t$ and $x_t^{-1}$ are not present in the infinite normal form for $g$, then
\begin{itemize}
\item $u=\gamma \# a^{r_{t-1}} b^{s_{t-1}} \#  \# \eta$, and
\item $v= \gamma \# a^{r_{t-1}} b^{s_{t-1}} \# b \# \eta$.
\end{itemize}
\end{enumerate}

We now claim that the set of all strings of types $(1),(2)$ and $(3)$ above form a non-blind deterministic $1$-counter language. We again build a machine which accepts the language of such strings with $\gamma$ and $\eta$ replaced by any string in $\{a,b,\#\}^*$.  In each case we note that the set of such words can be divided into a prefix language which is a $1$-counter language and a suffix language which is regular, and the value of the counter must be $0$ to transition between the two languages.  The prefix and suffix languages for each type of pair $\otimes(u,v)$ listed in Case 2.2.2 are given in the following table.

\medskip

\begin{center}
\begin{tabular}{|l|l|l|l|}
\hline
Type of pair & Prefix Language & Suffix Language & FSA accepting \\
 & & & Suffix Language \\
\hline
(1) & $\{ \otimes(\gamma  \# a^nb^k \#,\gamma \# a^nb^{k+1})\}$  & $\{\otimes(\eta,\#\eta)\}$ & $M_1$  \\
 & with $n \geq 0, \ k \geq 1$ & & \\
\hline
(2) & $\{ \otimes(\gamma \# a^n  \#,\gamma \# a^n b)\}$  &  $S_2 = \{\otimes(a \eta,\# a \eta)\}$  &  $M_2$ \\
& with $n \geq 1$ & \hphantom{aaaa} $\cup \{\otimes(b \eta,\# b \eta)\}$ & \\
\hline
(3) & $\{ \otimes(\gamma \# \#,\gamma \# b)\}$  & $\{\otimes(\eta,\#\eta)\}$ & $M_1$  \\
  & & & \\
\hline
\end{tabular}
\end{center}

\medskip

The reasoning given at the beginning of Case 2.2 implies that if $\otimes(u,v)$ is a pair in this case, then the value of the counter at the end of the prefix string is equal to zero  This value is verified immediately after the $t$-th ${ \# \choose \#}$ symbol is read by the machine in the prefix word.

\begin{figure}[ht!]
\begin{center}
\scalebox{.8}{
\tikzset{every state/.style={minimum size=2em}} 
\begin{tikzpicture}[->,>=stealth',shorten >=1pt,auto, node distance=4cm,
semithick]

\node[state, initial] (S) {$q_0$};
\node[state] (A) [right of=S] {$q_1$};
\node[state] (B) [right of=A] {};
\node[state] (C) [below of=A] {};
\node[state, shape=rectangle] (D) [below of = C] {$M_1$ (Type (1))};
\node[state, shape=rectangle] (E) [below of = S] {$M_1$ (Type (3))};
\node[state, shape=rectangle] (F) [below of = B] {$M_2$ (Type (2))};

\path (S)
edge [loop above] node {${a\choose a}$} (S)
edge node {${b\choose b}_{+2}$} (A);

\path (A)
edge [loop  above] node {${a\choose a}_{>0},{b\choose b}_{>0,+1},{\#\choose \#}_{>0,-1}$} (A)
edge node {${a\choose a}_{=0}$} (B)
edge node {${b \choose b}_{=0}$} (C)
edge node {${\# \choose b}_{=0}$} (E);

\path (B)
edge  node {${b\choose b}$} (C)
edge [loop] node {${a\choose a}$} (B)
edge node {${\# \choose b}$} (F);

\path (C)
edge  node {${\#\choose b}$} (D)
edge [loop left] node {${b \choose b}$} (C);

\end{tikzpicture}}
\caption{A deterministic non-blind $1$-counter automaton used in Case 2.2.2 of the proof of Proposition \ref{prop:xone}.  The start state is $q_0$; any arrow terminating at $M_1$ or $M_2$ is assumed to end at the start state of the appropriate machine.}
\label{fig:L8}
\end{center}
\end{figure}
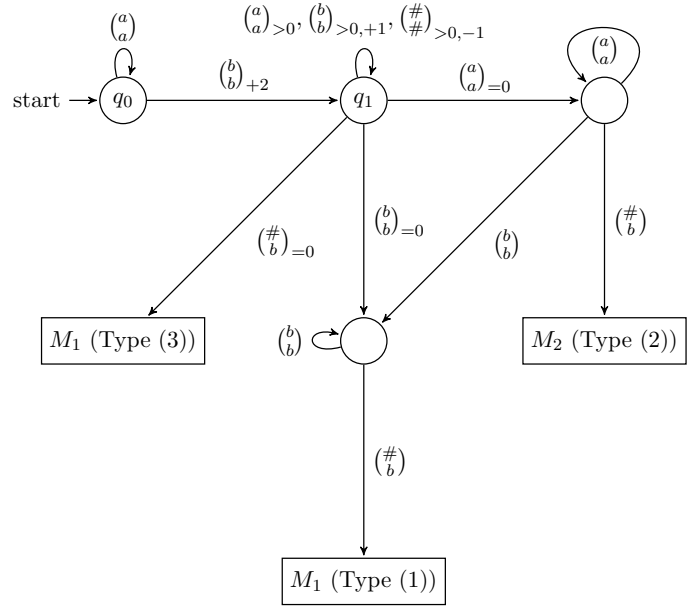

Figure \ref{fig:L8} contains a deterministic non-blind $1$-counter language accepting concatenations of prefix and suffix words in this case, with arbitrary strings $\gamma$ and $\eta$.  Let $L_{2.2.2}$ be the intersection of the language accepted by the machine in Figure \ref{fig:L8} with the regular language $\otimes(\Linf,\Linf)$.  Then the language of all $\otimes(u,v)$ accepted in Case 2.2 is exactly $L_2 = L_{2.2.1} \cup L_{2.2.2}$ which is a deterministic non-blind $1$-counter language.

Let $L_{s_0 \neq 0}=L_1 \cup L_2$, and it follows that $L_{x_1^{-1}} = L_{s_0 = 0} \cup L_{s_0 \neq 0}$ and is a deterministic non-blind $1$-counter language, as required.
\end{proof}

We have now proven the following theorem.
\begin{theorem}\label{thm:inf-normal-form-lang}
Thompson's group $F$ is deterministic non-blind $1$-counter graph automatic with respect to the generating set
$X = \{x_0^{\pm 1},x_1^{\pm 1} \}$ and symbol alphabet $\{a,b,\#\}$.
\end{theorem}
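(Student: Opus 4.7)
The proof plan is essentially an assembly of the ingredients already established in the section, so the work is in organizing them rather than in any new technical argument. I will verify each requirement of Definition~\ref{def:Cgraph} with $\mathcal{C} = \mathscr{C}_1$, the class of deterministic non-blind $1$-counter languages, the symmetric generating set $X = \{x_0^{\pm 1}, x_1^{\pm 1}\}$, and symbol alphabet $\Lambda = \{a,b,\#\}$.

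First, I would record that $\Linf \subseteq \Lambda^*$ is a normal form for $F$ by Lemma~\ref{lemma:regular-nf-inf}, and that it lies in $\mathscr{C}_1$: indeed Lemma~\ref{lemma:regular-nf-inf} shows $\Linf$ is regular, and every regular language is accepted by a deterministic non-blind $1$-counter automaton (simply ignore the counter). Quasigeodesicity of $\Linf$, established in Proposition~\ref{lemma:quasi-geo}, is not required by the definition of $\mathcal{C}$-graph automaticity but will follow as a bonus and justifies the phrase ``graph automatic structure'' in the strongest sense.

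Next I would check the four multiplier languages $L_{x_0^{-1}}, L_{x_0}, L_{x_1^{-1}}, L_{x_1}$. By Proposition~\ref{prop:xzero}, $L_{x_0^{-1}}$ is regular, hence in $\mathscr{C}_1$. By Proposition~\ref{prop:xone}, $L_{x_1^{-1}}$ is in $\mathscr{C}_1$ directly. For the positive generators, I would appeal to the result quoted from \cite{ET} (stated in the paragraph preceding Proposition~\ref{prop:xzero}) that $L_x \in \mathscr{C}_1$ if and only if $L_{x^{-1}} \in \mathscr{C}_1$; applied to $x = x_0$ and $x = x_1$, this yields $L_{x_0}, L_{x_1} \in \mathscr{C}_1$.

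Having verified that $\Linf \in \mathscr{C}_1$ and $L_x \in \mathscr{C}_1$ for every $x \in X$, the triple $(F, X, \Lambda)$ satisfies Definition~\ref{def:Cgraph} with $\mathcal{C} = \mathscr{C}_1$, proving the theorem. There is no real obstacle here: all the substantive work has been done in Propositions~\ref{prop:xzero} and~\ref{prop:xone}, and the final theorem is a packaging of those results together with the closure property from \cite{ET} relating $L_x$ and $L_{x^{-1}}$.
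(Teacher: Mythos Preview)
Your proposal is correct and takes essentially the same approach as the paper: the theorem is stated immediately after Propositions~\ref{prop:xzero} and~\ref{prop:xone} with the single sentence ``We have now proven the following theorem,'' so the proof is precisely the assembly of Lemma~\ref{lemma:regular-nf-inf}, Propositions~\ref{prop:xzero} and~\ref{prop:xone}, and the cited result from \cite{ET} that $L_x\in\mathscr{C}_1$ iff $L_{x^{-1}}\in\mathscr{C}_1$, exactly as you have written.
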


Since 1-counter languages are (strictly)
contained in the class of context-free languages, we obtain the corollary that $F$ is context-free graph automatic.

\begin{lemma}\label{lem:notRegLinfty}
$F$ is not graph automatic with respect to the normal form $\Linf$ and alphabet $\{a,b,\#\}$ given above.
\end{lemma}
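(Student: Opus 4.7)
The plan is to exhibit an infinite family of pairs $(u_k,v_k)$ in $\Linf\times\Linf$ with $\overline{u_k}x_1^{-1}=_F\overline{v_k}$ whose convolutions force a non-regular ``matching'' between two independently-varying counts, and then use closure of the regular languages under intersection to derive a contradiction via the pumping lemma. Since by Proposition \ref{prop:xone} it suffices to rule out regularity of a single multiplier language, I will focus on $L_{x_1^{-1}}$.

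The family I would use is $g_k = x_0^{-k}$ for $k\geq 1$, whose standard infinite normal form consists of only negative powers of $x_0$, so its representative in $\Linf$ is $u_k = b^k$. Multiplying on the right by $x_1^{-1}$ and repeatedly applying the relation $x_0^{-1}x_1^{-1}=x_2^{-1}x_0^{-1}$ (and more generally $x_0^{-1}x_j^{-1}=x_{j+1}^{-1}x_0^{-1}$) gives
\[
x_0^{-k}x_1^{-1}=x_{k+1}^{-1}\,x_0^{-k},
\]
which is already in standard infinite normal form with $M=k+1$. Converting to $\Linf$ (inserting $x_t^0$ for each index $1\leq t\leq k$) yields $v_k = b^k\#^{k+1}b$. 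Hence
\[
\otimes(u_k,v_k)=\binom{b}{b}^{\!k}\binom{\diamond}{\#}^{\!k+1}\binom{\diamond}{b}\in L_{x_1^{-1}}.
\]

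Now assume for contradiction that $L_{x_1^{-1}}$ is a regular language over the padded alphabet. The set
\[
R=\left\{\binom{b}{b}^{\!i}\binom{\diamond}{\#}^{\!j}\binom{\diamond}{b}\;\middle|\;i,j\geq 0\right\}
\]
is regular, so $L_{x_1^{-1}}\cap R$ would also be regular. I would then check that no other $\otimes(u,v)\in L_{x_1^{-1}}$ lies in $R$: any such convolution forces $u=b^i$ and $v=b^i\#^j b$, which by the bijectivity of $\Linf$ with $F$ and the computation above forces $i=k\geq 1$ and $j=k+1$. Therefore
\[
L_{x_1^{-1}}\cap R=\left\{\binom{b}{b}^{\!k}\binom{\diamond}{\#}^{\!k+1}\binom{\diamond}{b}\;\middle|\;k\geq 1\right\},
\]
and a direct application of the pumping lemma for regular languages (pumping inside the ${b\choose b}$-block breaks the equality between the number of ${b\choose b}$'s and one less than the number of ${\diamond\choose\#}$'s) shows this language is not regular, contradicting our assumption.

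The routine part is the permutation computation of $x_0^{-k}x_1^{-1}$ and the verification that the only pairs of $\Linf$-normal forms whose convolutions lie in $R$ are exactly the $(u_k,v_k)$ above; the main conceptual step is choosing the family so that the unbounded block of $\#$'s introduced by pushing $x_1^{-1}$ across $k$ copies of $x_0^{-1}$ is exactly what regular languages cannot count against the preceding $b$-block. Since $L_{x_1^{-1}}$ fails to be regular, $\Linf$ is not a graph automatic normal form for $F$ over $\{a,b,\#\}$.
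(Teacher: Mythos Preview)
Your proposal is correct and uses exactly the same family $u_k=b^k$, $v_k=b^k\#^{k+1}b$ and the same underlying computation $x_0^{-k}x_1^{-1}=x_{k+1}^{-1}x_0^{-k}$ as the paper. The only difference is cosmetic: the paper pumps directly on $\otimes(b^p,b^p\#^{p+1}b)$ and checks the pumped word is not in $L_{x_1^{-1}}$, whereas you first intersect with the regular set $R$ to isolate the family and then pump; both arguments amount to the same contradiction.
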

\begin{proof}
Suppose $L_{x_1^{-1}}$ was a regular language, with  pumping length $p$.
The string $\otimes(b^p, b^p\#^{p+1}b)$ is in the language, representing
\begin{itemize}
\item $\bar{u}=_F x_0^{-p}$, and
\item $\bar{v}=_Fx_0^{-p}x_1^{-1}=_F x_{p+1}^{-1}x_0^{-p}$.
\end{itemize}
By the Pumping Lemma for Regular Languages, $\otimes(b^p, b^p\#^{p+1}b)$  can be partitioned into $xyz$ with $|xy|\leq p, |y|>0$ and $xy^iz\in L_{x_1^{-1}}$ for all $i\in\mathbb N$.  However, the convolution $\otimes(b^{p+m}, b^{p+m}\#^{p+1}b)$, for any $m>0$, does not lie in $L_{x_1^{-1}}$ as the second word does not have the correct number of $\#$ symbols, that is, $x_0^{-(p+m)}x_1^{-1} \neq x_{1+p}^{-1} x_0^{p+m}$. Hence $L_{x_1^{-1}}$ is not a regular language.
\end{proof}

\bibliographystyle{plain}
\bibliography{refs}

\end{document}